\newtheorem{theorem}{Theorem}
\newtheorem{lemma}{Lemma}
\theoremstyle{definition}
\newtheorem{example}{Example}
\begin{document}
\afterpage{\rhead[]{\thepage} \chead[\small  I. I. Deriyenko 
]{\small Rectangles in latin squares} \lhead[\thepage]{} }                  %%%%%%%%% complete

\begin{center}
\vspace*{2pt} {\Large \textbf{Rectangles in latin squares}}\\[26pt]
{\large \textsf{\emph{Ivan I. Deriyenko }}} 
\\[26pt]
\end{center}
{\footnotesize{\bf Abstract.}  To get another from a given latin square, we have to change at least 4 entries.
We show how to find these entries and how to change them.}
\date{} \footnote{\textsf{2010 Mathematics Subject
Classification:} 05B15, 20N05 }

\footnote{\textsf{Keywords:} Quasigroup, latin square, autotopism.}

\noindent{\bf 1.}
The {\em distance} between two latin squares $||a_{ij}||$ and $||b_{ij}||$ of the same size $n>2$ is equal to the number of cells in which the corresponding elements $a_{ij}$ and $b_{ij}$ are not equal. The minimal distance between two latin squares is equal $4$. Moreover, for any $n>3$ there exists two latin squares of size $n$ which differ in precisely four entries \cite[Theorem 3.3.4]{KD}. In the case of latin squares defining groups the situation is more complicated. If two different Cayley table of order $n\ne\{4,6\}$ represent groups (not necessary distinct), then they differ from each other in at least $2n$ places. An arbitrary Cayley table of the cyclic group of order $4$ differs in at least four places from an arbitrary Cayley table of Klein's $4$-group (cf. \cite{Don, Drap, Fri, KD}).  

The interesting question is: {\em When two latin squares which differ in precisely four entries are isomorphic or isotopic}. To solve this problem we will use autotopies of the corresponding quasigroups, i.e., three bijections $\alpha$, $\beta$, $\gamma$ of a quasigroup $Q$ such that
$\alpha(x)\cdot\beta(y)=\gamma(x\cdot y)$ for all $x,y\in Q$.

\medskip
\noindent{\bf 2.} 
Let $Q=\{1,2,3,\ldots,n\}$ be a finite set. The multiplication (composition) of
permutations $\varphi$ and $\psi$ of $Q$ is defined as $\varphi\psi(x)=\varphi(\psi(x))$. All permutations will be
written in the form of cycles and cycles will be separated by points, e.g.
$$
\varphi=\left(\arraycolsep=1mm
\begin{array}{ccccccccc}
1 & 2 & 3 & 4 & 5 & 6&7&8\\
3 & 1 & 2 & 5 & 4 & 6&8&7
\end{array}
\right)=(132.45.6.78.)
$$

Permutations $L_i$ ($i\in Q$) of $Q$ such that $L_{i}(x)= i\cdot x$ for all $x\in Q$, are called {\em left translations} of an element $i$. Such permutations were firstly investigated by V. D. Belousov (cf. \cite{Bel}) in connection with some groups associated with quasigroups. Next, such translations were used by many authors to describe various properties of quasigroups, see for example \cite{D'91}, \cite{DD} or \cite{Sch}. Left translations are applied in \cite{Ind} to the construction of some polynomials that can be used to determine which quasigroups are isotopic. Namely, the main results of \cite{Ind} shows that isotopic quasigroups have the same polynomials.

\medskip
\noindent {\bf 3.}
We say that elements $x,y,z,u\in Q$, $x\ne z$, $y\ne u$, determine a {\em rectangle} in a quasigroup $Q$ if $xy=zu=a$ and $xu=zy=b$ for some $a,b\in Q$. Vertices of such rectangle have the form $xy$, $xu$, $zx$ and $zu$ (see example below). Such determined rectangle will be denoted by $\langle x,y,z,u\rangle$ or by $\langle x,u,z,y\rangle$. It is clear that $\langle x,u,z,y\rangle =\langle x,\,x\!\setminus\!a,\,a/(x\!\setminus\!b),\,x\!\setminus\!b\rangle$.

\begin{example}\label{Ex1}\rm
The following quasigroup
$$\begin{array}{c|cccccccc} 
\cdot&1&2&3&4&5&6&7\\ \hline 
    1&1&2&3&4&5&6&7\\ 
		2&2&1&\fbox{4}&3&7&5&\fbox{6}\\ 
		3&3&6&5&1&4&7&2\\
		4&4&5&2&7&6&1&3\\
		5&5&4&7&6&2&3&1\\
		6&6&7&1&2&3&4&5\\
		7&7&3&\fbox{6}&5&1&2&\fbox{4}
\end{array}$$
has the rectangle $\langle 2,3,7,7\rangle$. Other rectangles of this quasigroup are calculated in Example \ref{Ex3}.
\end{example}

\begin{theorem}
A group has a rectangle if and only if it has an element of order two.
\end{theorem}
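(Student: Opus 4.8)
The plan is to rewrite the two rectangle equations in the language of group multiplication, extract an element that squares to the identity, and then run the computation backwards to produce a rectangle from any such element.

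First I would handle the forward direction. Suppose the group $G$ has a rectangle $\langle x,y,z,u\rangle$, so that $xy=zu$ and $xu=zy$ with $x\ne z$ and $y\ne u$. Left-multiplying the first equation by $x^{-1}$ and then right-multiplying by $u^{-1}$ gives $x^{-1}z=yu^{-1}$, and the same manipulation applied to the second equation gives $x^{-1}z=uy^{-1}$. Equating the two expressions yields $yu^{-1}=uy^{-1}$, i.e. the element $t:=yu^{-1}$ equals its own inverse, so $t^2=e$; moreover $t\ne e$ because $y\ne u$. Hence $G$ contains an element of order two.

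For the converse I would argue constructively: given $t\in G$ with $t\ne e$ and $t^2=e$, pick arbitrary $x,u\in G$ and set $y:=tu$ and $z:=xt$. Then $x\ne z$ and $y\ne u$ (either equality would force $t=e$), while $xy=x(tu)=(xt)u=zu$ and $zy=(xt)(tu)=xt^2u=xu$. Putting $a:=xy=zu$ and $b:=xu=zy$ therefore exhibits $\langle x,y,z,u\rangle$ as a rectangle; here $a\ne b$ is automatic, since $a=b$ would give $xy=xu$ and hence $y=u$ by cancellation in the group.

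I do not expect a real obstacle here. The only points needing care are multiplying on the correct side when solving each equation, since the group need not be abelian, and checking that the element produced in the forward direction is a genuine involution rather than the identity --- which is exactly what the hypothesis $y\ne u$ guarantees; symmetrically, the assumption $t\ne e$ is what makes the constructed vertices $x,z$ and $y,u$ distinct in the converse.
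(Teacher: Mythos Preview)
Your argument is correct and follows essentially the same route as the paper: in both proofs the rectangle conditions are combined to force $(yu^{-1})^{2}=e$, and an involution is then used to build a rectangle explicitly. The only difference is cosmetic: for the converse the paper simply takes $\langle e,e,t,t\rangle$, which is the special case $x=u=e$ of your more general construction $y=tu$, $z=xt$.
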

\begin{proof}
Let $x$ be an element of a group $G$ such that $x^2=e$ and $x\ne e$. Then $ee=xx=e$, $ex=xe=x$, so $<e,e,x,x>$ is a rectangle (square).

Conversely, let $<x,y,z,u>$ be a rectangle in a group $G$. Then $xyu^{-1}=xuy^{-1}$, and consequently $(yu^{-1})^2=e$. So $a=yu^{-1}\ne e$ has order two.
\end{proof}

\begin{lemma}
Isotopic $($antiisotopic$)$ quasigroups have the same number of rectangles.
\end{lemma}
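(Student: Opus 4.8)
The plan is to produce an explicit bijection between the set of rectangles of one quasigroup and the set of rectangles of the other; equality of cardinalities then follows. Recall that an isotopy of $(Q,\cdot)$ onto $(Q,\ast)$ is a triple $(\alpha,\beta,\gamma)$ of permutations of $Q$ with $\alpha(x)\ast\beta(y)=\gamma(x\cdot y)$ for all $x,y\in Q$, while an antiisotopy is a triple $(\alpha,\beta,\gamma)$ with $\alpha(x)\ast\beta(y)=\gamma(y\cdot x)$ for all $x,y\in Q$. I would treat the isotopic case first and then record the small change needed for the antiisotopic one.

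So assume $(\alpha,\beta,\gamma)$ is an isotopy of $(Q,\cdot)$ onto $(Q,\ast)$ and let $\langle x,y,z,u\rangle$ be a rectangle of $(Q,\cdot)$, i.e. $x\ne z$, $y\ne u$, $xy=zu=a$ and $xu=zy=b$. Put $x'=\alpha(x)$, $z'=\alpha(z)$, $y'=\beta(y)$, $u'=\beta(u)$. Injectivity of $\alpha$ and $\beta$ gives $x'\ne z'$ and $y'\ne u'$, and the isotopy identity gives
$$x'\ast y'=\gamma(xy)=\gamma(a)=\gamma(zu)=z'\ast u',\qquad x'\ast u'=\gamma(xu)=\gamma(b)=\gamma(zy)=z'\ast y',$$
so $\langle x',y',z',u'\rangle$ is a rectangle of $(Q,\ast)$. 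Hence $\langle x,y,z,u\rangle\mapsto\langle\alpha(x),\beta(y),\alpha(z),\beta(u)\rangle$ carries rectangles to rectangles. Since $(\alpha^{-1},\beta^{-1},\gamma^{-1})$ is again an isotopy, now of $(Q,\ast)$ onto $(Q,\cdot)$, the same recipe applied to it produces the two-sided inverse of this map; thus the map is a bijection and the two rectangle counts coincide. Because it acts coordinatewise, it commutes with the swap $(x,y,z,u)\mapsto(x,u,z,y)$, so it is well defined on rectangles under the identification $\langle x,y,z,u\rangle=\langle x,u,z,y\rangle$.

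For an antiisotopy the only difference is that rows and columns exchange roles. Starting from a rectangle $\langle x,y,z,u\rangle$ of $(Q,\cdot)$ as above, the identity $\alpha(p)\ast\beta(q)=\gamma(q\cdot p)$ yields $\alpha(y)\ast\beta(x)=\gamma(xy)=\gamma(zu)=\alpha(u)\ast\beta(z)$ and $\alpha(y)\ast\beta(z)=\gamma(zy)=\gamma(xu)=\alpha(u)\ast\beta(x)$, so $\langle\alpha(y),\beta(x),\alpha(u),\beta(z)\rangle$ is a rectangle of $(Q,\ast)$; injectivity of $\alpha,\beta$ again supplies the required distinctness, and the inverse antiisotopy supplies the inverse map. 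Equivalently, one may note that transposing a latin square (replacing $\cdot$ by the operation $x\circ y=y\cdot x$) obviously preserves the number of rectangles, since a rectangle simply transposes to a rectangle, and that an antiisotopy is an isotopy composed with this transposition, reducing the antiisotopic case to the isotopic one.

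I do not expect a genuine obstacle here: the whole content consists of the two short computations with the (anti)isotopy identity. The only points that need a moment's care are verifying the non-degeneracy conditions $x'\ne z'$, $y'\ne u'$ for the image — which is exactly where bijectivity of $\alpha$ and $\beta$ is used — and checking that the construction respects the convention $\langle x,y,z,u\rangle=\langle x,u,z,y\rangle$ so that it is well defined on rectangles.
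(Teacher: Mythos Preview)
Your proof is correct and follows exactly the same idea as the paper's: map a rectangle $\langle x,y,z,u\rangle$ of $(Q,\cdot)$ to $\langle\alpha(x),\beta(y),\alpha(z),\beta(u)\rangle$ and observe that this is a rectangle of the isotope. You have in fact been more thorough than the paper, which leaves the verification to the reader and does not spell out the antiisotopic case or the well-definedness under $\langle x,y,z,u\rangle=\langle x,u,z,y\rangle$.
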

\begin{proof}
Let $(Q,\cdot)$ and $(Q,\circ)$ be isotopic quasigroups, i.e., $\gamma(x\cdot y)=\alpha(x)\circ\beta(y)$ for some bijections of $Q$ and all $x,y\in Q$. Then, as it is not difficult to see, $\langle x,y,z,u\rangle$ is a rectangle of $(Q,\cdot)$ if and only if $\langle\alpha(x),\beta(y),\alpha(z),\beta(u)\rangle$ is a rectangle of $(Q,\circ)$. 
\end{proof}

\medskip
\noindent{\bf 4.} Each quasigroup $Q=(Q,\cdot)$ determines five new quasigroups $Q_i=(Q,\circ_i)$ with the operations $\circ_i$ defined as follows:
$$
\begin{array}{cccc}
x\circ_1 y=z\longleftrightarrow x\cdot z=y\\
x\circ_2 y=z\longleftrightarrow z\cdot y=x\\
x\circ_3 y=z\longleftrightarrow z\cdot x=y\\
x\circ_4 y=z\longleftrightarrow y\cdot z=x\\
x\circ_5 y=z\longleftrightarrow y\cdot x=z\\
\end{array}
$$
Such defined (not necessarily distinct) quasigroups are called {\em parastrophes} or {\em conjugates} of $Q$. Traditionally they are denoted as

\medskip\centerline{ 
$Q_1=Q^{-1}=(Q,\backslash), \ \ Q_2={}^{-1}\!Q=(Q,/), \ \ Q_3= {}^{-1}\!(Q^{-1})=(Q_1)_2$,}

\medskip\centerline{$Q_4=({}^{-1}\!Q)^{-1}=(Q_2)_1$ \ and \ $Q_5=({}^{-1}\!(Q^{-1}))^{-1}=((Q_1)_2)_1=((Q_2)_1)_2$.}

From the above results and results obtained in \cite{Dud} it follows that {\em a fixed $IP$-quasigroup and all its parastrophes have the same number of rectangles. Similarly, a quasigroup isotopic to a group and its parastrophes.}

\medskip
\noindent{\bf 5.} 
Note that if a commutative quasigroup has a rectangle $\langle x,y,z,u\rangle$ with $x=y$, then also $z=u$. Similarly, $z=u$ implies $x=y$. So, if in a commutative quasigroup $Q$ one of vertices of the rectangle $\langle x,y,z,u\rangle$ lies on the diagonal of the multiplication table of $Q$, then this rectangle is a square and its diagonal coincides with the diagonal of the multiplication table.

In Boolean groups each two elements $x,y$ determine the rectangle $\langle x,y,xy,e\rangle$; each three elements $x,y,z$ determine the rectangle $\langle x,y,z,xyz\rangle$. 

An interesting question is how to find rectangles in a given quasigroup. Direct calculation of $\langle x,\,x\!\setminus\!a,\,a/(x\!\setminus\!b),\,x\!\setminus\!b\rangle$ is rather trouble. Below we present simplest method based on left translations.

Let $\langle x,y,z,u\rangle$ be a rectangle in a quasigroup $(Q,\cdot)$. Then, according to the definition, 
$xy=zu=a$ and $xu=zy=b$. Thus the left translations $L_x$ and $L_z$ have the form 
$$
L_x=\left(\arraycolsep=1mm
\begin{array}{ccccccccc}
\ldots & y &\ldots & u &\ldots\\
\ldots & a &\ldots & b &\ldots
\end{array}
\right), \ \ \ \ \ 
L_z=\left(\arraycolsep=1mm
\begin{array}{ccccccccc}
\ldots & y &\ldots & u &\ldots\\
\ldots & b &\ldots & a &\ldots
\end{array}
\right).
$$
Hence
$$
L_xL_z^{-1}=\left(\arraycolsep=1mm
\begin{array}{ccccccccc}
\ldots & b&\ldots & a &\ldots\\
\ldots & a &\ldots & b &\ldots
\end{array}
\right).
$$
This means that the permutation	$L_xL_z^{-1}$ used in the construction of indicators (for details see \cite{Ind}) has the cycle $(a,b)$. 
Thus vertices $a,b$ of this rectangle are located in the $x$-th row, vertices $b,a$ in the $a/(x\!\setminus\!b)$-th row. Since $L_zL_x^{-1}=(L_xL_z^{-1})^{-1}$, the permutation $L_xL_z^{-1}$ and $L_zL_x^{-1}$ have the same cycles of the length $2$. So, it is sufficient to calculate $L_xL_z^{-1}$ for $x<z$ only.

\begin{example}\label{Ex3}
The quasigroup presented in Example \ref{Ex1} has the following left translations: $L_1=(1.2.3.4.5.6.7.)$ and
$$
\begin{array}{llllll}
L_2=(12.34.576.),&&
L_3=(1354.267.),&&
L_4=(1473256.),\\[3pt]
L_5=(1524637.),&&
L_6=(1642753.),&&
L_7=(1745.236.).
\end{array}
$$
Consequently, 
$$
\begin{array}{llllll}
L_1L_2^{-1}=(12.34.576.),&&
L_2L_4^{-1}=(15.24.367.),&&
L_3L_4^{-1}=(17.25643.),\\[3pt]
L_4L_5^{-1}=(13.267.54.),&&
L_2L_7^{-1}=(17253.46.).&&
\end{array}
$$
In other $L_xL_z^{-1}$ with $x<z$,  there are  no cycles of the length $2$.

Below we present cycles $(a,b)$ and rectangles generated by these cycles.

$\begin{array}{lllllllll}
(1,2):\langle 1,1,2,2\rangle,&&&(3,4):\langle 1,3,2,4\rangle,&&&(1,5):\langle 2,2,4,6\rangle,\\[3pt]
(2,4):\langle 2,1,4,3\rangle,&&&(1,7):\langle 3,4,4,6\rangle,&&&(1,3):\langle 4,6,5,7\rangle,\\[3pt]
(5,4):\langle 4,1,5,2\rangle,&&&(4,6):\langle 2,3,7,7\rangle.
\end{array}
$

Hence this quasigroup has eight rectangles.
\end{example}

Note that one pair $(a,b)$ can determine several rectangles.

\begin{example}\label{Ex4}
In the quasigroup
$$\begin{array}{c|cccccccc} 
\cdot&1&2&3&4&5&6\\ \hline 
    1&1&2&3&4&5&6\\ 
		2&2&1&4&5&6&3\\ 
		3&3&4&2&6&1&5\\
		4&4&5&6&2&3&1\\
		5&5&6&1&3&2&4\\
		6&6&3&5&1&4&2
\end{array}$$
the pair $(1,2)$ determines three rectangles: $\langle 1,1,2,2\rangle$, $\langle 3,5,5,3\rangle$, $\langle 4,6,6,4\rangle$. Other rectangles are: $\langle 3,3,4,4\rangle$, $\langle 3,3,6,6\rangle$, $\langle 4,4,5,5\rangle$ and $\langle 5,5,6,6\rangle$.
\end{example}

\medskip
\noindent{\bf 6.} A quasigroup $(Q,\circ)$ is a {\em rectangle transformation} of a quasigroup $(Q,\cdot)$ if in $(Q,\cdot)$ there exists a rectangle $\langle a,b,c,d\rangle$ such that
$$
x\circ y=\left\{\begin{array}{ccl}
a\cdot d&&{\rm if } \ x=a,\ y=b,\\
a\cdot b&&{\rm if } \ x=c,\ y=b,\\
c\cdot b&&{\rm if } \ x=c,\ y=d,\\
c\cdot d&&{\rm if } \ x=a,\ y=d,\\
x\cdot y&&{\rm in \;other\;cases}.
\end{array}
\right.
$$

\begin{example}\label{Ex2}\rm
This quasigroup is obtained from the quasigroup given in Example \ref{Ex1} as a rectangle transformation by $\langle 2,3,7,7\rangle$:
$$\begin{array}{c|cccccccc} 
\circ&1&2&3&4&5&6&7\\ \hline 
    1&1&2&3&4&5&6&7\\ 
		2&2&1&\fbox{6}&3&7&5&\fbox{4}\\ 
		3&3&6&5&1&4&7&2\\
		4&4&5&2&7&6&1&3\\
		5&5&4&7&6&2&3&1\\
		6&6&7&1&2&3&4&5\\
		7&7&3&\fbox{4}&5&1&2&{\fbox{6}}
\end{array}$$
\end{example}

Two rectangles $\langle x,y,z,u\rangle$ and $\langle x',y',z',u'\rangle$ of a quasigroup $(Q,\cdot)$ are {\em equivalent} if there exists an autotopism $(\alpha,\beta,\gamma)$ of $(Q,\cdot)$ such that $\alpha(x)=x'$, $\beta(y)=y'$, $\alpha(z)=z'$ and $\beta(u)=u'$.

\begin{theorem}
A rectangle transformation by equivalent rectangles gives isotopic quasigroups.
\end{theorem}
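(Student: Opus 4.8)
The plan is to show that if $\langle x,y,z,u\rangle$ and $\langle x',y',z',u'\rangle$ are equivalent rectangles of $(Q,\cdot)$, witnessed by an autotopism $(\alpha,\beta,\gamma)$, and if $(Q,\circ)$ is the rectangle transformation of $(Q,\cdot)$ by $\langle x,y,z,u\rangle$ while $(Q,\circ')$ is the rectangle transformation by $\langle x',y',z',u'\rangle$, then $(Q,\circ)$ and $(Q,\circ')$ are isotopic; in fact I expect the same triple $(\alpha,\beta,\gamma)$ to serve as an isotopism between them. So the goal is to verify $\gamma(x_0\circ y_0)=\alpha(x_0)\circ'\beta(y_0)$ for all $x_0,y_0\in Q$.

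First I would record the four "special" cells of the transformation $\circ$: these are exactly the cells $(x,y),(z,y),(z,u),(x,u)$ (using the rectangle's own notation $\langle a,b,c,d\rangle=\langle x,y,z,u\rangle$), and on them $\circ$ swaps the two values $a=x\cdot y=z\cdot u$ and $b=x\cdot u=z\cdot y$. Likewise $\circ'$ has special cells $(x',y'),(z',y'),(z',u'),(x',u')$ carrying the swapped values $a'=x'\cdot y'$ and $b'=x'\cdot u'$. Because $(\alpha,\beta,\gamma)$ is an autotopism carrying the first rectangle to the second, we have $\alpha(x)=x'$, $\beta(y)=y'$, $\alpha(z)=z'$, $\beta(u)=u'$, and from $\gamma(x\cdot y)=\alpha(x)\cdot\beta(y)$ applied to the four vertices we get $\gamma(a)=a'$ and $\gamma(b)=b'$. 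Consequently the map $(x_0,y_0)\mapsto(\alpha(x_0),\beta(y_0))$ sends the four special cells of $\circ$ bijectively onto the four special cells of $\circ'$.

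Next I would split the verification of $\gamma(x_0\circ y_0)=\alpha(x_0)\circ'\beta(y_0)$ into two cases. If $(x_0,y_0)$ is not one of the four special cells of $\circ$, then $(\alpha(x_0),\beta(y_0))$ is not one of the four special cells of $\circ'$ (by the bijection just noted), so both sides reduce to the ordinary product: $\gamma(x_0\circ y_0)=\gamma(x_0\cdot y_0)=\alpha(x_0)\cdot\beta(y_0)=\alpha(x_0)\circ'\beta(y_0)$, using that $(\alpha,\beta,\gamma)$ is an autotopism of $(Q,\cdot)$. If $(x_0,y_0)$ is one of the four special cells, say $(x_0,y_0)=(x,y)$, then $x_0\circ y_0=a\cdot d$, i.e. $x_0\circ y_0=b$ by the definition in Section~6 (here $a\cdot d=x\cdot u=b$), while on the other side $(\alpha(x_0),\beta(y_0))=(x',y')$ is the corresponding special cell of $\circ'$, so $\alpha(x_0)\circ'\beta(y_0)=b'$; and $\gamma(b)=b'$ from the previous paragraph. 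The remaining three special cells are handled identically, each time matching a swapped value $a$ or $b$ on the left with its image $a'$ or $b'=\gamma(b)$ on the right. This exhausts all cells, so $(\alpha,\beta,\gamma)$ is an isotopism $(Q,\circ)\to(Q,\circ')$.

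The only real point requiring care—the "main obstacle," though it is mild—is the bookkeeping of which labelling of the rectangle is used where: the transformation in Section~6 is stated with the vertex names $a\cdot b,\ a\cdot d,\ c\cdot b,\ c\cdot d$, so one must consistently substitute $(a,b,c,d)\leftarrow(x,y,z,u)$ and check that the four cells and the two swapped values line up correctly with the rectangle relations $xy=zu$ and $xu=zy$. One should also note that the definition of equivalence fixes $\alpha,\beta$ on the four rectangle-coordinates but the values $\gamma(a),\gamma(b)$ are then forced by the autotopism identity, so there is no freedom to worry about; and since $(Q,\circ')$ is genuinely a rectangle transformation by a rectangle of $(Q,\cdot)$ (not of $(Q,\circ)$), its special cells are exactly as described. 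With that dictionary fixed, the case analysis above is routine and completes the proof.
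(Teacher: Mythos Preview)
Your proposal is correct and follows essentially the same approach as the paper: both show that the autotopism $(\alpha,\beta,\gamma)$ of $(Q,\cdot)$ itself serves as an isotopism between the two rectangle transformations, by verifying the identity $\gamma(x_0\circ y_0)=\alpha(x_0)\circ'\beta(y_0)$ first on the four special cells and then on the remaining cells. Your write-up is in fact more careful than the paper's, since you explicitly justify that the map $(x_0,y_0)\mapsto(\alpha(x_0),\beta(y_0))$ carries special cells to special cells (hence non-special to non-special), a point the paper glosses over.
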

\begin{proof} Let $(Q,\circ)$ and $(Q,*)$ be quasigroups obtained from $(Q,\cdot)$ as a rectangle transformation by $\langle a,b,c,d\rangle$ and $\langle \alpha(a),\beta(b),\alpha(c),\beta(d)\rangle$, where $(\alpha,\beta,\gamma)$ is an autotopy of a quasigroup $(Q,\cdot)$. Then $\gamma(a\circ b)=\gamma(a\cdot d)=\alpha(a)\cdot\beta(d)=
\alpha(a)*\beta(b)$. Analogously $\gamma(a\circ d)=\alpha(a)*\beta(d)$, $\gamma(c\circ b)=\alpha(c)*\beta(b)$ and $\gamma(c\circ d)=\alpha(c)*\beta(d)$. 
In other cases $x\circ y=x\cdot y=x*y$. So, quasigroups $(Q,\circ)$ and $(Q,*)$ are isotopic.
\end{proof}

\begin{example}\label{Ex5}
Applying the condition $\gamma L_i\beta^{-1}=L_{\alpha(i)}$, $i=1,2\ldots,7$, to the quasigroup defined in Example \ref{Ex1}, after long computations, we can see that this quasigroup has only one non-trival autotopism. It has the form $(\alpha,\beta,\gamma)$, where $\alpha =(15.24.37.6.)$, $\beta=(12.36.47.5.)$, $\gamma=(14.25.67.3.)$. Thus, the rectangle $\langle 1,1,2,2\rangle$ is equivalent to the rectangle $\langle \alpha(1),\beta(1),\alpha(2),\beta(2)\rangle=\langle 5,2,4,1\rangle=\langle 4,1,5,2\rangle$.
Also rectangles $\langle 1,3,2,4\rangle$ and $\langle 4,6,5,7\rangle$,  $\langle 2,1,4,3\rangle$ and $\langle 2,2,4,6\rangle$, $\langle 3,4,4,6\rangle$ and $\langle 2,3,7,7\rangle$ are pairwise equivalent. So, the quasigroup defined in Example \ref{Ex1} has four non-equivalent rectangles. Thus, by rectangle transformations, from this quasigroup we obtain four non-isotopic quasigroups.
\end{example}

Observe that by the converse rectrangle tansformations we obtain the same quasigroup. So, by the rectangle transformation from two non-isotopic quasigroups we can obtain isotopic quasigroups. Hence, the converse of Theorem 2 is not true.

\end{document}